\newtheorem{theorem}{Theorem}
\newtheorem{corollary}[theorem]{Corollary}
\theoremstyle{definition}
\newtheorem{proposition}[theorem]{Proposition}
\theoremstyle{definition}
\theoremstyle{definition}
\theoremstyle{definition}
\newtheorem{definition}[theorem]{Definition}
\newcommand{\Nat}{\mathbb{N}}
\newcommand{\T}{\mathcal{T}}
\newcommand{\U}{\mathcal{U}}
\newcommand{\V}{\mathcal{V}}
\newcommand{\N}{\mathcal{N}}
\newcommand{\diam}{\textup{diam}}
\title{Borel families of games}
\author{Alexander Kastner and Clark Lyons}
\begin{document}

\maketitle

A \emph{family of games} is a set $B \subseteq X \times \N$, where $\N$ is the Baire space. We think of each vertical section $B_x \subseteq \N$ as the payoff set for player II. In this note, we give an elementary proof of the following theorem, which is related to recent work on homomorphism graphs in \cite{HomomorphismGraphs}. The theorem probably follows from results in \cite{SchillingVaught}, but we give a much more streamlined proof. The result can also be obtained as a consequence of Feng, Magidor, and Woodin's work on universally Baire set of reals \cite{FMW}. We refer to Kechris's book \cite{Kechris} for definitions of all undefined concepts.

\begin{theorem} \label{maincor}
Let $X$ be a Polish space, and suppose that $B \subseteq X \times \N$ is a Borel family of games. Then
\[ W = \{x \in X: \text{player II has a winning strategy in $B_x$}\}\]
is Baire measurable and universally measurable. In the case where $X = [\Nat]^{\aleph_0}$, the set $W$ is also completely Ramsey.
\end{theorem}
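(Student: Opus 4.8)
\noindent\emph{Proof strategy.} The plan is to exhibit $W$ as the outcome of a determined game and then to transfer Martin's Borel determinacy to the three regularity properties by \emph{unfolding}. First I would fix a coding of player II's strategies as the points of a Polish space $S$ of strategies (naturally homeomorphic to $\N$), and let $r\colon X\times S\times\N\to X\times\N$ be the continuous map sending $(x,\sigma,z)$ to $(x,\sigma*z)$, where $\sigma*z$ is the run of $B_x$ in which II follows $\sigma$ and $z$ codes I's moves. Then $C=\{(x,\sigma): \forall z\,(x,\sigma*z)\in B\}$ is coanalytic, since it is defined by a universal quantifier over $\N$ applied to the Borel set $r^{-1}(B)$, and $W=\mathrm{proj}_X C$. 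By the symmetric construction for player I together with Borel determinacy (Martin), the complement $X\setminus W$ is also the projection of a coanalytic set, and the two projections partition $X$. This only exhibits $W$ as $\Delta^1_2$, which by itself is not enough for regularity, so the core of the argument is to upgrade this by playing an auxiliary game.

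For each of the three properties I would use its game-theoretic characterization and unfold it against the payoff game $B_x$. For the Baire property the relevant auxiliary game is the Banach--Mazur game; for universal measurability it is the measure-theoretic covering game associated to a fixed Borel probability measure $\mu$; and for complete Ramseyness (when $X=[\Nat]^{\aleph_0}$) it is Kastanas's game on the Ellentuck space. In each case I would build a single unfolded game in which the two players simultaneously (i) run the auxiliary game, thereby producing a point $x$ in the relevant test region, and (ii) run the game $B_x$ itself, producing a run $a$; player II wins iff $(x,a)\in B$. Because $B$ is Borel and the maps extracting $x$ and $a$ from the moves are continuous, the payoff set of the unfolded game is Borel, so Martin's theorem applies and the unfolded game is \emph{determined}.

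The remaining step is the case analysis. From a winning strategy for II in the unfolded game over a test region $U$ I would extract that $W$ is large there (comeager in $U$, or $\mu$-conull, or Ramsey-positive), and from a winning strategy for I that $W$ is small on a subregion (meager, $\mu$-null, or contained in a Ramsey-null set), using that I and II cannot both have winning strategies in $B_x$. Running through every $U$ and assembling the local verdicts by the usual exhaustion argument then yields, respectively, the Baire property, $\mu$-measurability for every $\mu$ (hence universal measurability), and, via the Ellentuck characterization, complete Ramseyness.

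I expect the main obstacle to be the extraction step, namely converting a single winning strategy in the unfolded game into winning strategies in the fibre games $B_x$ for a large set of parameters $x$. The difficulty is that in the unfolded game II's responses in the run $a$ may depend on the auxiliary moves that I uses to steer the produced point $x$, so the strategy does not \emph{prima facie} restrict to a strategy for fixed $x$. Resolving this requires a fusion/diagonalization argument showing that, for a comeager (respectively conull, Ramsey-large) set of $x$, the run-responses can be arranged to depend only on I's moves inside $B_x$, together with a careful verification that the unfolded payoff set really is Borel so that Martin's determinacy is genuinely available. Once these are in place, the rest is bookkeeping.
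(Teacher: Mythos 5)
Your plan correctly identifies the architecture that the paper itself uses---an unfolded game in which the players jointly steer a point $x$ through a topological game while simultaneously playing out a run $a$ of the fibre game, with payoff $(x,a)\in B$, followed by determinacy and a largeness/smallness dichotomy assembled over all open test regions---but the step you defer (``the extraction step \dots\ requires a fusion/diagonalization argument'') is precisely the mathematical core of the theorem, and deferring it leaves a genuine gap. The paper carries this out in Theorem~\ref{mainlem}: given a winning strategy $\sigma$ for II in the unfolded game, one uses Zorn's lemma to build, for each finite sequence $(m_0,\dots,m_k)$ of I's fibre moves, a maximal family of positions whose associated response-open-sets $\sigma(\dots)_0$ are pairwise \emph{disjoint} with dense union; intersecting these countably many dense open unions gives a comeager $G_\delta$, and for $x$ in it the uniqueness of the open set containing $x$ at each level lets II read off responses $\sigma(\dots)_1$ that depend only on $(m_0,\dots,m_k)$, i.e.\ a genuine strategy in $B_x$. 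This disjointification is exactly what breaks the dependence of II's run-responses on I's auxiliary topological moves---the difficulty you yourself name---so as it stands your third paragraph is a statement of what must be proved rather than a proof. (Your opening $\Delta^1_2$ computation is correct but, as you note, carries no weight.)

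There is also an organizational cost in your plan worth flagging: using three different auxiliary games (Banach--Mazur, covering, Kastanas) forces three separate extraction lemmas, and the measure case is the most delicate of the three, because the covering game is asymmetric---only player I steers the point, so the two roles in $B_x$ sit on opposite sides of the unfolded game, and a run consistent with a winning strategy for the point-builder produces a single winning run of $B_x$, not membership of the produced point in $W$; one then needs both a fusion and an inner-measure estimate (the set of produced parameters is a priori only analytic). The paper avoids all of this with a change-of-topology device: the unfolded game of Theorem~\ref{main} is built over an arbitrary Choquet topology $\T$ refining the metric topology, its payoff is Borel because the diameter constraints make the run-to-$(x,a)$ map continuous (so Borel determinacy on possibly uncountable discrete move sets applies), and Baire measurability of $W$ in $(X,\T)$ via Proposition~\ref{BM characterization} is then instantiated three times: $\T$ the Polish topology (Baire property), $\T$ the density topology (Lebesgue, hence universal, measurability), and $\T$ the Ellentuck topology (complete Ramseyness). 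If you repair your outline, I recommend adopting this device: it reduces all three regularity properties to the single category-theoretic fusion lemma described above.
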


The following game and theorem, in the case when $(X, \T)$ is Polish and the $U_i, V_i$ come from a countable basis, are apparently due to Solovay in unpublished notes.

\begin{definition}
Let $(X, \T)$ be a Choquet space, and let $d$ be a metric whose open balls are in $\T$. Suppose that $\sigma_C$ is a winning strategy for player II in the Choquet game for $X$. We may assume that $\sigma_C$ only depends on the most recent move of player I. If $B \subseteq X \times \N$, we define the game $\mathcal{G}(X, B)$ as follows: \\
$\begin{matrix}
&\textup{I} \quad & (U_0, m_0) & & (U_1, m_1) \\
& &&&&& $\dots$ \\
&\textup{II} & & (V_0, n_0) & & (V_1, n_1) &
\end{matrix}$ \\
where the $m_i, n_i$ are natural numbers, $U_i, V_i$ are open sets in $\T$, $\diam_d(U_i), \diam_d(V_i) < 2^{-i}$, $V_i \subseteq U_i$, and $U_{i+1} \subseteq \sigma_C(V_i)$. This ensures that $\bigcap_i U_i = \bigcap_i V_i = \{x\}$ is a singleton. Player~II wins iff $(x, (m_0, n_0, m_1, n_1, \dots)) \in B$. Note that we can always choose to restrict the $U_i$ and $V_i$ to any weak basis for $(X, \T)$.
\end{definition}

\begin{theorem} \label{mainlem}
Let $(X, \T)$ be a Choquet space, let $d$ be a metric whose open balls are in $\T$, and let $B \subseteq X \times \N$.
\begin{enumerate}[label=(\alph*)]
\item If player II has a winning strategy in $\mathcal{G}(X, B)$, then
\[ W = \{x \in X: \text{player II has a winning strategy in $B_x$}\}\]
is comeager.
\item If player I has a winning strategy in $\mathcal{G}(X, B)$, then
\[ L = \{x \in X: \text{player I has a winning strategy in $B_x$}\}\]
is comeager in some nonempty open set $U \in \T$.
\end{enumerate}
\end{theorem}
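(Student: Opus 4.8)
The plan is to read $\mathcal{G}(X,B)$ as an \emph{unfolded Banach--Mazur game}: forgetting the integers $m_i,n_i$, the open moves $U_0\supseteq V_0\supseteq U_1\supseteq\cdots$ constitute a Banach--Mazur/Choquet game whose unique outcome is the point $x=\bigcap_i U_i=\bigcap_i V_i$, while the integer coordinates superimpose a run of $B_x$. From this viewpoint (a) and (b) are the unfolded analogues of the two halves of the classical Banach--Mazur theorem --- ``II wins $\Rightarrow$ target comeager'' and ``I wins $\Rightarrow$ target meager in some open set'' --- with ``$x\in\text{target}$'' replaced by ``the relevant player wins $B_x$''. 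Throughout I would invoke the remark in the definition to restrict every open move to a fixed countable weak basis, so that the tree of positions consistent with a given strategy is countably branching and open to a fusion.

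Before constructing anything I would record why the tempting \emph{direct} construction fails, since this isolates the difficulty. Given a winning strategy $\tau$ for II in $\mathcal{G}$, one would like, for each $x$, to build a winning strategy for II in $B_x$ by simulating $\mathcal{G}$: feed player I's moves of $B_x$ in as the $m_i$, choose the auxiliary sets $U_i$ to shrink onto $x$, and output $\tau$'s integers $n_i$. The obstruction is that $\tau$ controls the $V_i$, and the constraint $U_{i+1}\subseteq\sigma_C(V_i)$ forces $x\in\sigma_C(V_i)$ at every stage; a perfectly legal winning strategy may steer the $V_i$ away from a prescribed target. For instance, when $B=X\times\N$ every legal II-strategy wins $\mathcal{G}$, yet $W=X$ is comeager only for trivial reasons while the simulation succeeds for a meager set of $x$. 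So no fixed-target extraction can work, and the proof must exploit the mere \emph{hypothesis} that a player has a winning strategy, not its fine structure.

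Accordingly I would prove (a) by contradiction. Assume $\tau$ is winning for II but $W$ is not comeager, so $W^{c}=\{x:\text{II has no winning strategy in }B_x\}$ is nonmeager; I would then assemble a single run of $\mathcal{G}$ in which player I \emph{defeats} $\tau$ (just one play with outcome outside $B$ suffices, so no determinacy of $\mathcal{G}$ is needed). The run is built as an increasing chain of finite positions; the diameter bounds guarantee the topological coordinate converges to a point $x$, and at each stage, reacting to $\tau$'s response $(V_i,n_i)$, player I chooses $U_{i+1}\subseteq\sigma_C(V_i)$ of small diameter together with an integer $m_{i+1}$ so as to keep the emerging point inside $W^{c}$ and to spoil $\tau$'s integer play, using that at points of $W^{c}$ the II-play that $\tau$ produces along the approach is not winning and hence admits a defeating continuation. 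Part (b) is dual: from a winning strategy $\tau_I$ for I, the analogous fusion is carried out by player II, and the conclusion localizes to $U=U_0$, the open part of $\tau_I$'s first move, because every outcome lies in $U_0$ and II can only force the approach to converge inside that region --- precisely why (b) asserts comeagerness in some nonempty open $U$ rather than everywhere.

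The main obstacle, where I expect to spend the most care, is the entanglement in the inductive step: the topological limit $x$ itself depends on the integers already chosen (since $\tau$'s $V_i$ depend on the $m_i$), so one cannot first fix $x$ and then beat the integer game --- the two must be interleaved. The hard point is to leverage only the \emph{nonmeagerness} of $W^{c}$ (we have no Baire property for $W$ to invoke) to keep the conditions inside $W^{c}$ while simultaneously defeating $B_x$, all subject to the coupling $U_{i+1}\subseteq\sigma_C(V_i)$. Concretely I would phrase ``the outcome falls in $W^{c}$'' as a family of requirements on finite conditions and show they are \emph{dense} in the condition poset exactly because $W^{c}$ is nonmeager; verifying density against the $\sigma_C$-constraint is the technical heart. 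Here the Choquet property of $\sigma_C$ is what makes each requirement satisfiable --- every $\sigma_C(V_i)$ is a nonempty open set into which the approach can be pushed --- and driving the diameters to $0$ is what upgrades the generic chain to an honest point $x$, completing the contradiction.
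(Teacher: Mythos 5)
Your reduction to a proof by contradiction founders on a step that cannot be carried out: forcing the limit point of a single run into the merely nonmeager set $W^{c}$. A fusion along one run of $\mathcal{G}(X,B)$ can only guarantee that the limit point lies in sets captured by countably many dense requirements on finite positions, i.e.\ in a prescribed \emph{comeager} set; it can never guarantee landing in a prescribed \emph{nonmeager} set, which may be completely structureless (think of a Bernstein set: nonmeager, yet against an adversary controlling the $V_i$'s there is no way to steer a shrinking sequence of open sets onto one of its points). Your phrase ``show they are dense in the condition poset exactly because $W^{c}$ is nonmeager'' is precisely where the argument breaks: nonmeagerness of $W^{c}$ does not yield density of any family of finite conditions, and $\tau$ controls half of the shrinking sequence. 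A second, entangled problem: your appeal to ``at points of $W^{c}$ the II-play that $\tau$ produces along the approach is not winning and hence admits a defeating continuation'' needs a \emph{fixed} point $x$ and a fixed induced strategy before a defeating run can be selected (non-winning only means \emph{some complete run} defeats it, not that every partial play extends to a defeat detectable at a finite stage); in your interleaving, $x$ exists only after all integer moves have been committed, so at no finite stage is there an object whose non-winningness you can invoke.

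Both problems are solved simultaneously by the construction you dismissed too quickly. The paper's proof \emph{is} a fixed-target extraction, but with the auxiliary open moves chosen not greedily for a single $x$ but from maximal disjoint families obtained by Zorn's lemma and refined by $\sigma$: for each $m_0$ one takes $\U_{m_0}$ with $\{\sigma(U_0,m_0)_0 : U_0 \in \U_{m_0}\}$ pairwise disjoint with dense union, and iterates over all finite integer sequences $(m_0,\dots,m_k)$. The resulting set $D$ of points lying in all of the (countably many, since indexed by finite integer sequences) dense open unions is comeager, and for each $x \in D$ the unique-element selection turns $\sigma$ into an honest strategy for II in $B_x$, winning because the diameters shrink onto $x$; hence $D \subseteq W$. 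If you insist on the contrapositive packaging, this same construction is what rescues it: $W^{c}$ nonmeager implies $W^{c} \cap D \neq \emptyset$; pick $x_0$ there, note the induced strategy in $B_{x_0}$ is not winning, and lift a defeating integer run to a $\tau$-consistent run of $\mathcal{G}(X,B)$ with limit $x_0$ and outcome outside $B$ --- but that is the paper's argument, not a shortcut around it. (Also, your opening move of restricting to a countable weak basis is unavailable in the intended generality: the theorem is later applied with $\T$ the density or Ellentuck topology, which have no countable $\pi$-basis; this is exactly why the paper reaches for Zorn's lemma rather than a countable enumeration.)
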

\begin{proof}
(a) Suppose that $\sigma$ is a winning strategy for player II. For each $m_0$, Zorn's lemma yields a collection $\U_{m_0}$ of open sets $U_0 \in \T$ such that
\[ \V_{m_0} := \{\sigma(U_0, m_0)_0: U_0 \in \U_{m_0}\}\]
is a disjoint collection of open sets in $\T$ whose union is dense in $(X, \T)$. Now for each $U_0 \in \U_{m_0}$ and $m_1$, Zorn's lemma yields a collection $\U_{U_0, m_0, m_1}$ of open sets $U_1 \in \T$ such that
\[ \V_{U_0, m_0, m_1} := \{\sigma(U_0, m_0, U_1, m_1)_0: U_1 \in \U_{U_0, m_0, m_1}\} \]
is a disjoint collection of open sets in $\T$ whose union is dense in $\sigma(U_0,m_0)_0$. In particular, for each fixed $m_0, m_1$, the collection
\[ \V_{m_0, m_1} := \bigcup_{U_0 \in \U_0} \V_{U_0, m_0, m_1} \]
has dense union in $(X, \T)$. By continuing the construction above, we get collections $\V_{m_0, \dots, m_k}$ of disjoint open sets in $\T$, which in particular have dense union in $(X, \T)$. We claim that the dense $G_\delta$
\[ \bigcap_{k \in \Nat} \bigcap_{m_0, \dots, m_k} \left( \bigcup \V_{m_0, \dots, m_k} \right) \]
is contained in $W$. Fix $x$ in this set. We define a winning strategy for player II for the game $B_x$. If player I first plays $m_0$, let $U_0$ be the unique element of $\U_{m_0}$ such that $x \in \sigma(U_0, m_0)_0$. Player II should play $\sigma(U_0, m_0)_1$. Now suppose that player I's next move is $m_1$. Let $U_1$ be the unique element of $\U_{U_0, m_0, m_1}$ such that $x \in \sigma(U_0, m_0, U_1, m_1)_0$. Player II should then play $\sigma(U_0, m_0, U_1, m_1)_1$. Keep going in this way. By construction, we will have
\[ (x, (m_0, n_0, m_1, n_1, \dots)) \in B,\]
so this describes a winning strategy for player II in $B_x$. The proof of part (b) is entirely analogous.
\end{proof}

\begin{proposition}\label{BM characterization}
Let $X$ be a Baire space, and let $A \subseteq X$. Then $A$ is Baire measurable iff for every nonempty open set $U \subseteq X$, either $A$ is comeager in $U$ or there exists a nonempty open $V \subseteq U$ such that $A$ is meager in $V$.
\end{proposition}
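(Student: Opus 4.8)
The plan is to prove both implications directly from the definition that $A$ is Baire measurable iff $A \triangle O$ is meager for some open $O \subseteq X$, with the one nontrivial ingredient being the Banach Category Theorem (the union of any family of open meager sets is meager), which lets me pass from a local description of meagerness to a global one even though $X$ is only assumed to be a Baire space rather than second countable.

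For the forward direction, I would fix an open $O$ with $A \triangle O$ meager and a nonempty open $U$, and split into two cases according to whether $O \cap U$ is dense in $U$. If it is dense, then $U \setminus O$ is nowhere dense, and since $U \setminus A \subseteq (O \setminus A) \cup (U \setminus O)$ with both pieces meager, $A$ is comeager in $U$. If $O \cap U$ is not dense in $U$, I would choose a nonempty open $V \subseteq U$ disjoint from $O$; then $A \cap V \subseteq A \setminus O$ is meager, so $A$ is meager in $V$. This case analysis is elementary and carries no real difficulty.

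For the converse, assuming the localization property, I would set $O = \bigcup\{U \text{ open} : A \text{ is comeager in } U\}$ and $O' = \bigcup\{V \text{ open} : A \text{ is meager in } V\}$, and claim $A \triangle O$ is meager. The Banach Category Theorem gives both that $O \setminus A$ is meager (each point $y \in O \setminus A$ lies in some open $U$ with $U \setminus A$ meager, and $U \cap (O \setminus A) \subseteq U \setminus A$) and, in the same way, that $A \cap O'$ is meager. The hypothesis then shows $O \cup O'$ is dense: for any nonempty open $U$, either $A$ is comeager in $U$, so $U \subseteq O$, or there is a nonempty open $V \subseteq U$ in which $A$ is meager, so $\emptyset \neq V \subseteq O' \cap U$. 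Hence $X \setminus (O \cup O')$ is nowhere dense, and from $A \setminus O \subseteq (A \cap O') \cup (X \setminus (O \cup O'))$ I conclude $A \setminus O$ is meager. Combined with $O \setminus A$ meager, this gives $A \triangle O$ meager, so $A$ is Baire measurable.

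The main obstacle is the local-to-global step producing that $O \setminus A$ and $A \cap O'$ are meager: since $X$ is merely a Baire space, I cannot reduce to a countable basis and must invoke the Banach Category Theorem (proved via a maximal pairwise-disjoint subfamily using Zorn's lemma, exactly as in the constructions of Theorem~\ref{mainlem}). If $X$ were second countable this step would be immediate, as $O$ and $O'$ could be realized as countable unions of basic open pieces.
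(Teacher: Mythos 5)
Your proof is correct, and it rests on the same key ingredient as the paper's: the Banach Category Theorem, used to pass from local to global meagerness (the paper invokes it in the packaged form of Theorem 8.29 of Kechris, applied to $U(A^c)$, which is exactly your $O'$). The execution differs in both directions, though. For the converse, the paper works asymmetrically: it forms only $U(A^c)$, shows $A$ is meager there, decomposes $X = U(A^c) \sqcup \partial U(A^c) \sqcup (X \setminus \overline{U(A^c)})$, and invokes the hypothesis of the proposition just once, on the single open set $X \setminus \overline{U(A^c)}$, to conclude $A$ is comeager there (a meager-witness $V$ inside it would have to lie in $U(A^c)$, a contradiction); the open set witnessing Baire measurability is then $X \setminus \overline{U(A^c)}$. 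You instead build both $O$ and $O'$, apply the Banach theorem twice, and invoke the hypothesis at every nonempty open set to get that $O \cup O'$ is dense; your witness is $O$ itself, and your nowhere dense set $X \setminus (O \cup O')$ plays precisely the role of the paper's meager boundary $\partial U(A^c)$. Your version is symmetric and avoids closures and boundaries entirely, at the cost of one extra application of the category theorem. For the forward direction, the paper simply quotes the Baire alternative applied to $U$, whereas your case analysis on whether $O \cap U$ is dense in $U$ reproves that alternative directly from the definition of Baire measurability; this makes your write-up longer but self-contained, and every step of it checks out.
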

\begin{proof}
The forward direction is just the well-known Baire alternative applied to $U$. For the backward direction, consider the open set
\[ U(A^c) := \bigcup \{U: \text{$U$ is a nonempty open set in which $A$ is meager}\}.\]
By Theorem 8.29 in \cite{Kechris}, $A$ is meager in $U(A^c)$. The space $X$ can be partitioned as
\[ X = U(A^c) \sqcup \partial U(A^c) \sqcup (X \setminus \overline{U(A^c)}).\]
The boundary $\partial U(A^c)$ is meager, and we claim that $A$ is comeager in the open set $X \setminus \overline{U(A^c)}$. Otherwise, there is a nonempty open set $V \subseteq X \setminus \overline{U(A^c)}$ in which $A$ is meager. But this would contradict the definition of $U(A^c)$. Let's summarize: $A$ is meager in $U(A^c)$, $\partial U(A^c)$ is meager, and $A$ is comeager in $X \setminus \overline{U(A^c)}$. It follows that $A$ is Baire measurable.
\end{proof}

\begin{corollary}
Let $(X, \T)$ be a Choquet space, let $d$ be a metric whose open balls are in $\T$, and let $B \subseteq X \times \N$. Suppose that for every open set $U \in \T$, the game $\mathcal{G}(U, B)$ is determined. Then $W$ is Baire measurable in $(X, \T)$.
\end{corollary}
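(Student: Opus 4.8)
The plan is to combine the game-theoretic dichotomy of Theorem~\ref{mainlem} with the local Baire-measurability criterion of Proposition~\ref{BM characterization}. By that proposition, it suffices to show that for every nonempty open $U \in \T$, either $W$ is comeager in $U$ or there is a nonempty open $V \subseteq U$ in which $W$ is meager. So I would fix an arbitrary nonempty open $U \in \T$ and work inside the subspace $(U, \T|_U)$, which is again a Choquet space (an open subspace of a Choquet space is Choquet) with a compatible metric whose balls lie in $\T|_U$.

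The key step is to apply Theorem~\ref{mainlem} to the game $\mathcal{G}(U, B)$, which by hypothesis is determined. If player~II has a winning strategy in $\mathcal{G}(U, B)$, then part~(a) gives that $W \cap U = \{x \in U : \text{II wins } B_x\}$ is comeager in $U$, and we are in the first case of Proposition~\ref{BM characterization}. If instead player~I has a winning strategy in $\mathcal{G}(U, B)$, then part~(b) yields a nonempty open $V \in \T|_U$ (hence a nonempty open $V \subseteq U$ in $\T$) in which $L = \{x \in V : \text{I wins } B_x\}$ is comeager. The final thing to observe is that $L$ and $W$ are disjoint: if player~I has a winning strategy in $B_x$, then player~II cannot, since $B_x$ is a single fixed game with payoff set $B_x \subseteq \N$ for II, and a play of the game cannot be won by both players. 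Hence $W \cap V \subseteq V \setminus L$, and since $L$ is comeager in $V$, its complement $W \cap V$ is meager in $V$. This places us in the second case of Proposition~\ref{BM characterization}.

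Since $U$ was an arbitrary nonempty open set and in every case the dichotomy required by Proposition~\ref{BM characterization} holds, we conclude that $W$ is Baire measurable in $(X, \T)$.

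The main obstacle I anticipate is purely bookkeeping: verifying that Theorem~\ref{mainlem} genuinely applies to the subspace $(U, \T|_U)$ rather than to $X$ itself, i.e.\ that $U$ is a Choquet space with a suitable metric and that the vertical sections relevant to $\mathcal{G}(U, B)$ are the restrictions $B_x$ for $x \in U$. This is routine but must be stated cleanly. The only genuinely substantive point is the disjointness of $W$ and $L$, which underlies why ``player~I wins comeagerly'' forces ``player~II wins only on a meager set''; everything else is a direct assembly of the two cited results.
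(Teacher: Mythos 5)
Your proof is correct and follows exactly the route the paper intends: the corollary is stated immediately after Theorem~\ref{mainlem} and Proposition~\ref{BM characterization} precisely because it is the assembly you describe (determinacy of $\mathcal{G}(U,B)$ plus the dichotomy, with disjointness of $W$ and $L$ handling the meager case). The only point worth making explicit is that Proposition~\ref{BM characterization} applies because every nonempty Choquet space is a Baire space, but this is standard and your bookkeeping about open subspaces being Choquet is handled correctly.
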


\begin{theorem}[General Borel Determinacy]
Let $A$ be a discrete topological space (possibly uncountable). Then any Borel $B \subseteq A^\Nat$ corresponds to a determined game.
\end{theorem}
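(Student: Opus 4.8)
The plan is to run Martin's inductive unraveling argument, observing at every step that it is insensitive to the cardinality of the alphabet $A$: the only place countability is ever used is in the length ($<\omega_1$) of a Borel code and in the countable unions appearing in it, never in the set of available moves. Throughout, let $G(A, B)$ denote the Gale--Stewart game in which I and II alternately play elements of $A$, producing a run $x \in A^\Nat$, with II winning iff $x \in B$; we topologize $A^\Nat$ as a product of the discrete space $A$.

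First I would record the base case. By the usual Gale--Stewart argument---analyzing the tree of positions from which the closed player cannot be forced to lose---every game with clopen (indeed closed, or open) payoff is determined, and this argument refers to $A$ only as an abstract set of moves. Hence clopen games are determined for arbitrary $A$.

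Next I would set up the machinery of \emph{coverings}. A covering of $G(A,B)$ is an auxiliary game $G(\tilde A, \tilde B)$ together with a length-preserving monotone projection $\pi$ from its positions onto those of $G(A,B)$, and operations lifting strategies from the auxiliary game to $G(A,B)$ and pushing runs back down, in such a way that $\pi$ carries plays consistent with a lifted strategy to plays consistent with the original one, preserving the winner. The fundamental lemma is then purely formal: if the covering game is determined, so is $G(A,B)$. Crucially, nothing here constrains $|\tilde A|$ or $|A|$; the projection and lifting are defined position by position.

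The heart of the proof, and the step I expect to be the main obstacle, is the \emph{unraveling} construction: showing that the class of payoff sets admitting a covering whose pulled-back payoff is clopen contains the closed sets and is closed under complementation and countable unions. The closed case is Martin's delicate construction, in which the players accompany each move with auxiliary data certifying how the run is approaching (or leaving) the closed set; closure under countable unions is obtained by forming an inverse limit of the coverings that unravel each $B_n$ and checking that the limit is again a covering. Both constructions are combinatorial and go through verbatim for an arbitrary discrete alphabet, since $A$ serves only as an index set of moves. Granting these, a transfinite induction on Borel rank---whose length is $<\omega_1$ and whose successor steps use only complements and countable unions---produces, for any Borel $B \subseteq A^\Nat$, a covering of $G(A,B)$ whose payoff is clopen. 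That covering game is determined by the base case, and determinacy descends through the covering to $G(A,B)$, as desired.
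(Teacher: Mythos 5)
The paper never proves this statement: it is invoked as a black box (Martin's Borel determinacy, which in its standard presentations --- including the treatment in \cite{Kechris}, to which the paper defers for background --- is already formulated and proved for games on an arbitrary set of moves), and the paper's only concern is to \emph{apply} it to the tree of legal positions of $\mathcal{G}(U,B)$. Your proposal instead sketches the proof itself, and the sketch is structurally correct: Gale--Stewart determinacy for the clopen base case, the covering machinery with its purely formal transfer lemma, Martin's unraveling of closed sets, closure under complements and countable unions via inverse limits, and a transfinite induction of length $<\omega_1$ on Borel rank, all of which treat the alphabet $A$ only as an index set of moves. Three points would need sharpening in a full write-up. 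First, the inductive statement must be that every set of the given rank is unraveled by a \emph{$k$-covering} (one that leaves the first $k$ moves of the game untouched) for every $k$; without this uniformity the inverse limit of coverings degenerates --- its positions need not stabilize --- and the countable-union step collapses. Second, pulling a countable union $\bigcup_n B_n$ back through the inverse-limit covering yields a countable union of clopen sets, which is only open, so one further application of the closed/open unraveling is required to reach a clopen payoff. Third, and this actually strengthens your central observation: generality in $A$ is not merely tolerated by the proof but forced by it, since even when $A$ is countable the auxiliary moves in the closed-set unraveling are quasi-strategies, so the covering games have uncountable alphabets and the recursion cannot be set up at all unless the theorem is stated for arbitrary discrete $A$ from the outset. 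With those caveats, your account is a faithful outline of the known proof that the paper cites, rather than a new argument or a flawed one.
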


\begin{theorem} \label{main}
Let $X$ be a Polish space with compatible metric $d$, and suppose that $B \subseteq X \times \N$ is a Borel family of games. Suppose that $\T$ is a Choquet topology on $X$ such that the open $d$-balls are in $\T$. Then
\[ W := \{x \in X: \text{player II has a winning strategy in $B_x$}\}\]
is Baire measurable in $(X, \T)$.
\end{theorem}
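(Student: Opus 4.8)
The plan is to invoke the Corollary immediately preceding this theorem: it suffices to show that for every nonempty open set $U \in \T$, the game $\mathcal{G}(U, B)$ is determined. Fixing such a $U$, the open subspace $(U, \T|_U)$ is again Choquet, the restricted metric still has its open balls in $\T|_U$, and $B \cap (U \times \N)$ remains Borel, so it is enough to treat the case $U = X$ and prove that $\mathcal{G}(X, B)$ is determined. I would deduce this from General Borel Determinacy, which applies to games on an arbitrary, possibly uncountable, discrete alphabet. The real work is then to recast $\mathcal{G}(X, B)$ as such a game and to verify that the resulting payoff set is Borel.

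To set up the alphabet, fix a weak basis $\W$ for $(X, \T)$ and let $A = \W \times \Nat$, equipped with the discrete topology; by the final remark in the definition of $\mathcal{G}(X, B)$ we may require all of the moves $(U_i, m_i)$ and $(V_i, n_i)$ to lie in $A$, so a play is an element of $A^\Nat$. Each constraint defining a legal run, namely $\diam_d(U_i), \diam_d(V_i) < 2^{-i}$, $V_i \subseteq U_i$, and $U_{i+1} \subseteq \sigma_C(V_i)$, can be checked from a finite initial segment of the play. Hence the set of fully legal runs is closed in $A^\Nat$, while the sets ``player I is the first to break the rules'' and ``player II is the first to break the rules'' are open (countable unions of basic clopen cylinders). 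Adopting the usual convention that the first player to move illegally loses, the payoff set $P \subseteq A^\Nat$ for player II is the union of the open set where player I breaks the rules first with the set of fully legal runs whose associated position lies in $B$.

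It remains to see that this latter piece is Borel. By construction every legal run satisfies $\bigcap_i U_i = \{x\}$, and since $\diam_d(U_i) < 2^{-i}$, any two legal runs agreeing on their first $k$ moves yield points $x$ within $2^{-k}$ of one another; hence the decoding map
\[ \phi: p \longmapsto \bigl(x, (m_0, n_0, m_1, n_1, \dots)\bigr) \]
is continuous from the closed set of legal runs into the second-countable space $X \times \N$. Because the Borel $\sigma$-algebra of $X \times \N$ is generated by its open sets and the operation $E \mapsto \phi^{-1}(E)$ commutes with complements and countable unions, the class of $E$ with $\phi^{-1}(E)$ Borel is a $\sigma$-algebra containing all open sets; thus $\phi^{-1}(B)$ is Borel in the legal runs, and therefore in $A^\Nat$. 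This shows $P$ is Borel.

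By General Borel Determinacy the game $\mathcal{G}(X, B)$, and by the identical argument each $\mathcal{G}(U, B)$, is determined, and the Corollary then yields that $W$ is Baire measurable in $(X, \T)$. The one point demanding genuine care is the Borel-ness of $P$: everything hinges on the decoding map $\phi$ being continuous, which is precisely what the shrinking-diameter requirement on the $U_i$ guarantees, together with the remark that continuity into a second-countable target already suffices to pull Borel sets back to Borel sets, even though $A^\Nat$ is far from Polish when the weak basis $\W$ is uncountable. It is exactly here that appealing to Borel determinacy over an uncountable alphabet, rather than over $\Nat$, becomes essential.
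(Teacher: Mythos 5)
Your proposal is correct and takes essentially the same route as the paper's proof: recast $\mathcal{G}(U,B)$ as a Borel game on an uncountable discrete alphabet, use the shrinking-diameter condition to show the decoding map $\varphi$ is continuous, pull $B$ back to a Borel payoff set, and invoke General Borel Determinacy. The only differences are expository --- you make the illegal-move convention explicit where the paper works directly with the body $[T]$ of the tree of legal positions, and your parenthetical ``countable unions of basic clopen cylinders'' should just say ``unions'' when the weak basis is uncountable, a harmless slip since openness is all that is needed.
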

\begin{proof}
We need to show that the game $\mathcal{G}(U, B)$ is determined for every $U \in \T$. So fix $U \in \T$, and let $T$ be the tree of legal positions in $\mathcal{G}(U, B)$. Consider the map
\[ \begin{aligned}
\varphi: [T] &\rightarrow X \times \N \\
\left((U_0, m_0), (V_0, n_0), (U_1, m_1), \dots\right) &\mapsto (x, (m_0, n_0, m_1, \dots))
\end{aligned}\]
where $\{x\} = \bigcap_i U_i = \bigcap_i V_i$. If we can show that $\varphi: [T] \rightarrow (X, d) \times \N$ is continuous (where $[T]$ is viewed as a closed subspace of a product of uncountable discrete spaces), then this will imply that the payoff set for player II in $\mathcal{G}(U, B)$ is Borel as a subset of $[T]$. Hence, the General Borel Determinacy Theorem would imply that $\mathcal{G}(U, B)$ is determined.

Suppose that $\left((U_0, m_0), (V_0, n_0), (U_1, m_1), \dots\right) \in [T]$, and let
\[ \varphi\left((U_0, m_0), (V_0, n_0), (U_1, m_1), \dots\right) = (x, (m_0, n_0, m_1, \dots)).\]
A basic open neighborhood of $(x, (m_0, n_0, m_1, \dots))$ will have the form
\[ B_d(x; 2^{-k}) \times N_{m_0, n_0, \dots, m_k, n_k}.\]
Our requirement that $\diam_d(U_{k+1}), \diam_d(V_{k+1}) < 2^{-(k+1)}$ ensures that
\[ \varphi[N_{(U_0, m_0), (V_0, n_0), \dots, (U_{k+1}, m_{k+1}), (V_{k+1}, m_{k+1})}] \subseteq B_d(x; 2^{-k}) \times N_{m_0, n_0, \dots, m_k, n_k}.\]
This verifies that $\varphi: [T] \rightarrow (X, d) \times \N$ is a continuous function, and we are done.
\end{proof}

Our main theorem, Theorem~\ref{maincor}, now easily follows.

\begin{proof}[Proof of Theorem~\ref{maincor}]
We apply Theorem~\ref{main} for different choices of $\T$. To show $W$ is Baire measurable, take $\T$ to be the given Polish topology. To show $W$ is universally measurable, take $\T$ to be the density topology (after reducing to the case where the measure is Lebesgue measure on $(0,1)$). To show $W$ is completely Ramsey, take $\T$ to be the Ellentuck topology.
\end{proof}

We also remark that the $\mathcal{G}$ game and Theorem~\ref{mainlem} can be used to prove the classical Kuratowski–Ulam theorem.

\begin{theorem}
If $X$ and $Y$ are Polish spaces and $A\subseteq X\times Y$ is comeager, then $A_x$ is comeager in $Y$ for a comeager set of $x\in X$.
\end{theorem}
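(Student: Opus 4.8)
The plan is to deduce the Kuratowski--Ulam theorem from Theorem~\ref{mainlem}(a) by choosing the family of games $B \subseteq X \times \N$ so that $B_x$ is (a coded version of) the Banach--Mazur game on $Y$ with target $A_x$. Since $X$ is Polish, I would equip it with its Polish topology $\T$, a compatible complete metric $d$, and a winning Choquet strategy $\sigma_C$ for player~II, so that the game $\mathcal{G}(X,B)$ is defined and Theorem~\ref{mainlem}(a) is available. A pleasant feature is that Theorem~\ref{mainlem} imposes no definability hypothesis on $B$, so I am free to build $B$ by whatever coding is convenient.

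First I would fix countable bases and compatible metrics on $X$ and $Y$ and use the $Y$-basis to code the Banach--Mazur game on $Y$ by natural numbers: a sequence $(m_0, n_0, m_1, n_1, \dots) \in \N$ is read as player~I's and player~II's successive choices of basic open sets $M_i, N_i \subseteq Y$. I would then declare $(x, s) \in B$ exactly when, under this reading, the coded play is legal (nested, with $\diam$ shrinking below $2^{-i}$) and its unique limit point $y$ lies in $A_x$, together with the standard convention that the first player to break a rule loses. By the Banach--Mazur--Oxtoby theorem (see \cite{Kechris}), for each fixed $x$ player~II has a winning strategy in $B_x$ if and only if $A_x$ is comeager in $Y$; hence the set $W$ produced by Theorem~\ref{mainlem}(a) is precisely $\{x : A_x \text{ is comeager in } Y\}$.

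The crux is to verify the hypothesis of Theorem~\ref{mainlem}(a), namely that player~II has a winning strategy in $\mathcal{G}(X, B)$. Here I would observe that, for this $B$, a run of $\mathcal{G}(X, B)$ is nothing but a Banach--Mazur game played in the product $X \times Y$: player~I's move $(U_i, m_i)$ names the box $U_i \times M_i$, player~II's move $(V_i, n_i)$ names the box $V_i \times N_i$, and the constraints of $\mathcal{G}$ (namely $V_i \subseteq U_i$ and $U_{i+1} \subseteq \sigma_C(V_i) \subseteq V_i$, with shrinking $d$-diameters) together with the coded $Y$-legality force these boxes to be a legal, diameter-shrinking decreasing sequence whose intersection is the single point $(x, y)$. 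Player~II wins $\mathcal{G}(X, B)$ exactly when $(x, y) \in A$. Since $A$ is comeager in the Polish space $X \times Y$, the Banach--Mazur--Oxtoby theorem hands player~II a winning strategy $\tau$ in $G^{**}(A)$, and I would simulate $\tau$: at each stage I feed the box $U_i \times M_i$ to $\tau$, take its product response, shrink it to a basic subbox $V_i \times N_i$ obeying all the $\mathcal{G}$- and $Y$-diameter and nesting requirements, and output the corresponding $(V_i, n_i)$. This is a winning strategy for~II in $\mathcal{G}(X, B)$, so Theorem~\ref{mainlem}(a) yields that $W = \{x : A_x \text{ comeager}\}$ is comeager, which is exactly the desired conclusion.

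The main obstacle I anticipate is the move-for-move bookkeeping that matches the two games: one must check that $\tau$'s responses can always be shrunk to basic boxes that \emph{simultaneously} respect the $X$-side Choquet constraint, the $Y$-side Banach--Mazur legality, and the diameter bounds, and one must pin down the convention in the definition of $B$ for illegal moves so that both the equivalence ``II wins $B_x$ iff $A_x$ comeager'' is exact and II still wins $\mathcal{G}(X, B)$ when player~I plays an illegal $Y$-move. These points are routine once the dictionary between $\mathcal{G}(X, B)$ and $G^{**}(A)$ is in place; in particular the constraint $U_{i+1} \subseteq \sigma_C(V_i)$ only ever restricts player~I, so it is harmless to player~II.
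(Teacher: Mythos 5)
Your proposal is correct and follows essentially the same route as the paper: take $B_x$ to be (a coded) Banach--Mazur game for $A_x$ in $Y$, observe that $\mathcal{G}(X,B)$ then amounts to the Banach--Mazur game for $A$ in $X \times Y$, transfer player~II's winning strategy (which exists since $A$ is comeager), and apply Theorem~\ref{mainlem}(a). The only difference is that you carefully spell out the coding and the strategy-simulation bookkeeping that the paper compresses into the assertion that the two games are ``exactly'' the same.
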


\begin{proof}
Consider $B\subseteq X\times\mathcal{N}$, where for every $x\in X$, the game $B_x\subseteq\mathcal{N}$ is the Banach Mazur game for $A_x\subseteq Y$. Then $\mathcal{G}(X,B)$ is exactly the Banach Mazur game for $A\subseteq X\times Y$ in the product topology.

Since $A\subseteq X\times Y$ is comeager, player II has a winning strategy for $\mathcal{G}(X,B)$. By Theorem~\ref{mainlem}, player II has a winning strategy for $B_x\subseteq\mathcal{N}$ for a comeager set of $x\in X$. And therefore $A_x$ is comeager in $Y$ for comeager many $x\in X$.
\end{proof}

The above proof can be adapted to prove a generalization of the Kuratowski-Ulam theorem to the setting where $Y$ is a topological space with a $\pi$-basis of cardinality $\kappa$, and $X$ is a topological space with the property that the intersection of $\kappa$ many comeager subsets of $X$ is comeager.

\end{document}